\documentclass[12pt]{amsart}
\usepackage{}

\usepackage{amsfonts}

\setlength{\topmargin}{-0.5cm}
\setlength{\textwidth}{15cm}
\setlength{\textheight}{22.6cm}
\setlength{\topmargin}{-0.25cm}
\setlength{\headheight}{1em}
\setlength{\headsep}{0.5cm}
\setlength{\oddsidemargin}{0.40cm}
\setlength{\evensidemargin}{0.40cm}
\usepackage[alphabetic]{amsrefs}
\usepackage{amscd}
\usepackage{amsmath}
\usepackage{amssymb}
\usepackage{amsthm}

\usepackage{bigdelim}
\usepackage{color}
\usepackage{enumerate}
\usepackage{mathrsfs}
\usepackage{multirow}
\usepackage[all]{xy}
\newtheorem{thm}{Theorem}[section]
\newtheorem{cor}[thm]{Corollary}

\newtheorem{prop}[thm]{Proposition}
\newtheorem{conj}[thm]{Conjecture}

\newtheorem{lem}[thm]{Lemma}
\theoremstyle{definition}

\theoremstyle{remark}
\newtheorem{rmk}[thm]{Remark}

\newtheorem{ques}[thm]{Question}

\newcommand{\sE}{\mathscr{E}}

\newcommand{\cO}{\mathcal{O}}

\newcommand{\cL}{\mathcal{L}}
\newcommand{\cN}{\mathcal{N}}
\newcommand{\cM}{\mathcal{M}}
\newcommand{\sk}{\mathbf{k}}

\baselineskip = 15pt
\footskip = 32pt

\usepackage{titletoc}

\begin{document}
\title[Counterexamples to Fujita's conjecture on surfaces in char. $p>0$]{Counterexamples to Fujita's conjecture on surfaces in positive characteristic}
\author[Yi Gu]{Yi Gu}
\address{School of Mathematical Sciences, Soochow University, Suzhou, 215006, P. R. of China}
\email{sudaguyi2017@suda.edu.cn}
\author[Lei Zhang]{Lei Zhang}
\address{School of Mathematical Sciences,
	University of Science and Technology of China, Hefei, 230026, P. R. of China}
\email{zhlei18@ustc.edu.cn}
\author[Yongming Zhang]{Yongming Zhang}
\address{ School of Mathematics, China University of Mining and Technology, Xuzhou, 221116, P. R. of China}
\email{zhangym@cumt.edu.cn}

\begin{abstract}
We present counterexamples to Fujita's conjecture in positive characteristic. More precisely,  given any algebraically closed field $\sk$ of characteristic $p>0$ and any  positive integer $m$, we show there exists a smooth projective surface $S$ over $\sk$ admitting an ample Cartier divisor $A$ such that the adjoint linear system $|K_S+mA|$ is not free of base points.
\end{abstract}
\maketitle

\section{Introduction}
The study of adjoint linear systems plays an important role in the classification of varieties.  In this direction,  T. Fujita \cite{Fu85} proposed the following famous conjecture:
\begin{conj}[Fujita's conjecture]\label{Fujita-conj}
Let $X$ be a smooth projective variety of dimension $n$ over an algebraically closed field of characteristic zero and $A$ an ample Cartier divisor on $X$.
\begin{enumerate}
\item[{$\mathrm{i)}$}]  $\mathrm{(Freeness)}$ For $m\geq n+1$, the adjoint linear system $|K_X + mA|$ is free of base points.
\item[{$\mathrm{ii)}$}] $\mathrm{(Very ~~ampleness)}$ For $m\geq n+2$, the adjoint linear system $|K_X + mA|$ is very ample.
\end{enumerate}
\end{conj}
\noindent By taking $X=\mathbb{P}^n$ and $A$ a hyperplane, it can be seen that the bounds of $m$ in both the freeness and the very ampleness parts of this conjecture are optimal.

On curves Fujita's conjecture follows from Riemann-Roch formula immediately  and on surfaces it has been proved  by Reider's elegant method \cite{Re88}. The freeness part of this conjecture has now been proved up to dimension five (see \cite{EL93, He97, Ka97, YZ20})  while the very ampleness part remains widely open when $\dim X \geq 3$.  On the other hand, for varieties of arbitrary dimension,
there have been many other important ``Fujita's conjecture type'' results (see \cites{Dem93, Kol93, AS95, He97, Sm97} and so on). One of these remarkable results was  due to Angehrn and Siu \cite{AS95}: $|K_X + mA|$ is base point free for $m\geq \begin{pmatrix}n+1\\ 2\end{pmatrix} +1$. We refer the readers to \cite[\S~10.4A]{PosII} for a brief review about both the related results and techniques.
\smallskip

Although Fujita's conjecture was originally formulated in characteristic zero, many people have studied the positive characteristic version. For convenience, let us simply say Conjecture~\ref{Fujita-conj} to mean `the positive characteristic version of Conjecture~\ref{Fujita-conj}' when there is no risk of confusion.

Along this direction, in dimension two, by adapting Reider's method to characteristic $p$, Shepherd-Barron \cite[Corollary 8]{SB91} was able to prove Conjecture~\ref{Fujita-conj} in dimension two except for surfaces of general type and  quasi-elliptic  surfaces, and more recently it is claimed that Conjecture~\ref{Fujita-conj} also holds for the latter case by Chen \cite{Ch19}.  On the other hand, following from the celebrated works of Deligne and Illusie \cite{DI87} and of Langer \cite{Lan15}, Reider's method  applies to surfaces admitting a $W_2$-lifting and therefore Conjecture~\ref{Fujita-conj} is also true in the $W_2$-lifting case. Besides, some other `Fujita's conjecture type' results are also obtained for surfaces (see \cite{Na93, Na93b, Ter99, DCF15} and so on). For example Di Cerbo and Fanelli \cite{DCF15} proved $|2K_X + 38A|$ is very ample for a surface.

For varieties of arbitrary dimension, by results of Smith and Keeler (\cite{Ke08, Ke08E, Sm97, Sm00}), it is known that Conjecture \ref{Fujita-conj}  is also true under an additional assumption that $\mathcal{O}_X(A)$ is globally generated. Furthermore, there are many other attempts to prove Conjecture~\ref{Fujita-conj} in positive characteristic  (see \cite{Sch14,MS14, Ma19}  and so on). However, we find examples disproving Fujita's conjecture in positive characteristic, and there does not even exist a \emph{Fujita type bound}. In particular,
we have a negative answer to Schwede's question \cite[pp.~71]{Sch14} concerning the stable adjoint linear subsystems in dimension at least two.
Our main result is the following theorem.
\begin{thm}\label{thm-main}
 Let $\sk$ be an arbitrary algebraically closed field of positive characteristic and $m\in \mathbb{N}_+$ an arbitrary positive integer.  Then there exists a smooth projective surface $S$ over $\sk$ admitting an ample Cartier divisor $A$  such that the  complete linear system $|K_S + mA|$ is not free of base point.
\end{thm}

The surface $S$ in Theorem~\ref{thm-main} is a generalization  of the so-called Raynaud's surface  (\cite{Ra78, Mu13}).  As is known to experts, comparing with the case in characteristic zero, one disadvantage in dealing with Conjecture~\ref{Fujita-conj} is the failure of Kodaira type vanishing  in positive characteristic\footnote{However, Murayama's opinion that `the failure of Kodaira-type vanishing is not the main obstacle to Fujita's conjecture'  seems right \cite[Principal~1.6]{Ma19}.  For instance, as Chen \cite{Ch19} claimed, Fujita's conjecture holds for quasi-elliptic surfaces while Kodaira's vanishing fails on these surfaces \cite{Ra78,Zhe17}.}.  In history, Raynaud's surface is the first counterexample to Kodaira's vanishing (\cite{Ra78}). So we firstly  checked Conjecture~\ref{Fujita-conj} for Raynaud's surfaces,  and with a non-trivial observation  we actually discovered that Conjecture~\ref{Fujita-conj} already fails for some special Raynaud's surfaces provided the characteristic $p$ is large. To obtain examples for all primes, we made a `passage from $p$ to $q=p^n$' generalization of  Raynaud's surfaces to obtain the final example (cf. \S~\ref{Sec: a generalization of Raynaud's surface}) as required in Theorem~\ref{thm-main}.

\smallskip

Finally, concerning with the Seshadri constant, we have the following comments on  the pair $(S,A)$ in Theorem~\ref{thm-main}. Fixing the base field $\sk$ and $3\le m\in \mathbb{N}_+$, there is a divisor $\Gamma_2$ on $S$ (cf. \S~\ref{Subsec: generalized raynaud}) and a positive constant $b(m)$ such that the Seshadri constant  $\varepsilon(A,x)<b(m)$ for any $x\in \Gamma_2$ and $\varepsilon(A, x)\geq 1$ when $x\in S\setminus \Gamma_2$. The constants $b(m)$ are furthermore such that $\lim\limits_{m\to \infty}b(m)=0$.  In particular,  although the adjoint system $|K_S + mA|$  is not free of base points,  it does define a birational map whenever $m> 4$. Indeed, as $\varepsilon(A,x)\geq 1$ when $x\in S\setminus \Gamma_2$, we have the Frobenius-Seshadri constant $\varepsilon_F(mA,x)\ge \dfrac{m}{2}>2$ for such $x$ (cf. \cite[Prop.~2.12]{MS14}). It then follows from  \cite[Thm.~1.1(2)]{MS14} that $|K_S+mA|$ defines a birational map.  So in spite that effectivity on the freeness and very ampleness of  adjoint linear systems fails due to Theorem~\ref{thm-main}, the effectivity on birationality still makes sense.
 \begin{ques}
Is there a constant $B(n,p)$ such that for any  $n$-dimensional smooth projective variety $X$ defined over a field of characteristic $p$ and an ample Cartier divisor $A$ on $X$, the adjoint linear system $|K_X+mA|$ defines a birational map if $m\ge B(n,p)$?  Furthermore, can we expect $B(n,p)$ is independent on $p$?
\end{ques}

\medskip

\textbf{Acknowledgement:} We would like to thank  Sho Ejiri, Hiromu Tanaka, Takumi Murayama, Qingyuan Xue, Lifan Guan and Yifei Chen for their useful comments and suggestions. The first named author is supported by grant
NSFC (No. 11801391).  The second author is partially supported by supported by the National Key R and D Program of China (No. 2020YFA0713100) and NSFC (No. 11771260) and the Fundamental Research Funds for Central Universities. The corresponding author is supported by grant NSFC (No. 12101618).

\section{A generalization of Raynaud's surface}\label{Sec: a generalization of Raynaud's surface}
In this section, we are going to present a certain generalization of Raynaud's surface  given in \cite{Ra78}. In the next section, we will show our generalization does give the example as required in Theorem~\ref{thm-main}.  Let us fix an algebraically closed field $\sk$ of characteristic $p>0$.

\subsection{Outline of the generalization}\label{Subsec: outline}
Let us briefly recall Raynaud's surface and then outline how we generalize his construction.

Raynaud's surface $S_1$ is, roughly speaking, a cyclic cover $\pi_1: S_1\to \mathbb{P}$ of degree prime to $p$ over a ruled surface  $\rho: \mathbb{P}\to C$ over a curve $C$ such that $\pi_1$ is branched along two \emph{smooth} irreducible divisors $\Sigma_i\subseteq \mathbb{P},i=1,2$ satisfying
\begin{enumerate}[(i)]
\item    $\rho|_{\Sigma_1}: \Sigma_1\to C$ is an isomorphism, namely, $\Sigma_1$ is a section of $\rho$;

\item    $\rho|_{\Sigma_2}: \Sigma_2\to C$ is  a purely inseparable morphism of degree $p$; and

\item $\Sigma_1\cdot \Sigma_2=0$.
\end{enumerate}

Our generalization of Raynaud's construction is to replace (ii)  with
\begin{enumerate}[(ii*)]
\item the restriction map $\rho|_{\Sigma_2}: \Sigma_2\to C$ is a purely inseparable morphism of degree $q=p^n$ for  any prescribed $n\in \mathbb{N}_+$.
\end{enumerate}

\subsection{The base curve}
Fix an arbitrary positive integer $n$ and set $q=p^n$.  We  firstly  give the base curve $C$, which is obtained by a slight modification of the example in \cite[Exm.~1.3]{Mu13}.

Let $C\subseteq \mathbb{P}_\sk^2=\mathrm{Proj}(\sk[X,Y,Z])$ be the plane curve defined by the equation:
\begin{equation}\label{defining equation}
Y^{qe}-X^{qe-1}Y=XZ^{qe-1},
\end{equation}where $e\in \mathbb{N}_+$ is a free variable. It is easy to   check  that $C$ is a smooth curve and  $$2g(C)-2=qe(qe-3).$$

Take $\infty:=[0,0,1]$ on $C$. Then $U_1:=C\backslash \infty=C\cap \{ X\neq 0\}$ is an affine open subset defined by $y_1^{qe}-y_1=z^{qe-1}$ with $y_1=Y/X$ and $z=Z/X$. As a result, $\mathrm{d}z$ is a generator of $\Omega^1_{C}|_{U_1}$ since $\mathrm{d}y_1=z^{qe-2}\mathrm{d}z$. In particular, we have
\begin{equation}\label{equ: formula of K_C}
K_C=\mathrm{div}(\mathrm{d}z)=(2g(C)-2)\infty=qe(qe-3)\cdot \infty.
\end{equation}

Next set $V:=C\cap \{Z\neq 0\}$.  Then $V\subset C$ is an open affine subset defined by the equation $y^{qe}-x^{qe-1}y=x$ with $y=Y/Z$ and $x=X/Z$. The special point $\infty$ is now given by  $x=y=0$.  After some easy local calculations, we have:
\begin{itemize}
\item $y$ is a local parameter at $\infty=[0,0,1]$ and invertible on $V\backslash \infty$, and

\item $v_\infty(x)=qe$. Indeed, by $x=y^{qe}-x^{qe-1}y$, it follows that
$$v_\infty(x) \geq \min\{qe\cdot v_\infty(y)=qe, (qe-1)\cdot v_\infty(x) + 1\} >0$$
and thus $(qe-1)\cdot v_\infty(x) + 1 > qe$, which gives the inequality $v_\infty(x)=qe$.
\end{itemize}
As a result, the function  $$\gamma:=\left(\dfrac{x}{y^{qe}}\right)^{qe-2}\cdot y=(1-\left(\dfrac{x}{y}\right)^{qe-1})^{qe-2}\cdot y=(1-(y^{qe-1}-x^{qe-1})^{qe-1})^{qe-2}\cdot y \in \cO(V)$$ is also a local parameter at $\infty$. In particular, $\mathrm{d}\gamma$ is a generator of $\Omega_{C,\infty}^1$ and hence there is an open neighbourhood $\infty\in U_2\subseteq V$ such that  $\mathrm{d}\gamma$ is a generator of  $\Omega_{C}^1|_{U_2}$.

Finally, we have
$$z-y^{-qe}=\dfrac{1}{x}-\dfrac{1}{y^{qe}}=\dfrac{x^{qe-1}y}{xy^{qe}}=y^{qe(qe-3)}\gamma,$$
or equivalently,
$$z=(y^{e(qe-3)})^q\gamma+(y^{-e})^q.$$

In summation, we have an affine covering $C=U_1\bigcup U_2$ along with two rational functions $z_1=z,z_2=\gamma$ such that
\begin{itemize}
\item $z_i$ is regular on $U_i$ and $\mathrm{d}z_i$ is a generator of $\Omega_{C/\sk}^1|_{U_i}$, and
\item the translation relation of $z_i$ is given by:
\begin{equation}\label{equ: translation relation}
z_1=\alpha^qz_2+\beta^q,
\end{equation}
where  $\alpha=y^{e(qe-3)}\in \cO_{C}(U_1\cap U_2)^*~\mathrm{and}~\beta=y^{-e}\in \cO_{C}(U_1\cap U_2).$
\end{itemize}
\begin{rmk}
Note that as $y$ and $\alpha$ are invertible on $V\backslash \infty$, it turns out that $\mathrm{d}\gamma=\mathrm{d}z_2$ is a generator of $\Omega_{C}^1|_V$ and we can in fact take $U_2=V$.
\end{rmk}

\subsection{A rank two locally free sheaf $\sE$ on $C$}\label{sec:vectorbundle}
By the previous constructions, $z_i\in \cO_{U_i}$ and $\mathrm{d}z_i$ is a generator of $\Omega_{C/\sk}^1|_{U_i}$. As a consequence,   in the following finite purely inseparable cover $V_i:=\mathrm{Spec}(\cO_{U_i}[t]/(t^q-z_i)) \to U_i$, the source $V_i$ is regular. In fact, by Jacobian criterion, $$\Omega_{V_i/\sk}=\dfrac{(\Omega_{U_i}\otimes_{\cO_{U_i}}\cO_{V_i})\oplus \cO_{V_i}\cdot\mathrm{d}t}{\cO_{V_i}\cdot\mathrm{d}(t^q-z_i)}=\dfrac{\cO_{V_i}\cdot\mathrm{d}z_i\oplus \cO_{V_i}\cdot\mathrm{d}t}{\cO_{V_i}\cdot\mathrm{d}z_i}\simeq \cO_{V_i}\cdot \mathrm{d}t\simeq \cO_{V_i}.$$
Therefore, the covering map $V_i\to U_i$ coincides with the $n$-th Frobenius map,  and  we have $$F^n_*\cO_C|_{U_i}=\cO_{U_i}[t]/(t^q-z_i)=\cO_{U_i}[\sqrt[q]{z_i}]=\bigoplus\limits_{j=0}^{q-1}\cO_C\cdot \sqrt[q]{z_i^j}.$$ Then we construct a rank two locally free subsheaf $\sE\subseteq F^n_*\cO_C$ as follows.
On $U_i$, $\sE$ is given by:
\begin{equation}\label{equ: Sym-equ}
\sE|_{U_i}=\cO_{U_i}\cdot X_i(:=1)\oplus \cO_{U_i} \cdot Y_i(:=\sqrt[q]{z_i}) \subseteq F^n_*\cO_C|_{U_i}=\bigoplus\limits_{j=0}^{q-1}\cO_C\cdot \sqrt[q]{z_i^j}.
\end{equation}
\noindent Here note that by equation (\ref{equ: translation relation}), we have the following translation relation on $U_1\cap U_2$:
$$
\left\{\begin{array}{rl}
X_1&=X_2,\\
Y_1&=\alpha\cdot Y_2+\beta \cdot X_2.
\end{array}\right.
$$
where $\alpha=y^{e(qe-3)}\in \cO_{C}(U_1\cap U_2)^*~\mathrm{and}~\beta=y^{-e}\in \cO_{C}(U_1\cap U_2).$ So the above construction of $\sE$ makes sense.

By construction, as a subsheaf  of $F^n_*\cO_C$, $\sE$ contains the natural saturated subsheaf $\cO_C\subseteq F^n_*\cO_C$  and hence   it induces an exact sequence:
\begin{equation}\label{ext: ext of E}
0\to \cO_C\to \sE\stackrel{\theta}{\to} \cL\to 0.
\end{equation}
As $\cO_C$ is locally generated by $1=X_i$ on $U_i$, $\cL$ is an invertible sheaf with generators $\eta_i:=\theta(Y_i)$ on $U_i$ and translation relation $\eta_1=\alpha\cdot \eta_2$. As $v_\infty(\alpha)=e(qe-3)$ and $\mathrm{div}(\eta_1)=e(qe-3)\cdot \infty$, we have
\begin{equation}\label{equ: formula of L}
\cL\cong\cO_C(e(qe-3)\cdot\infty) \,\, \text{and} \,\, \cL^q\cong \omega_{C}.
\end{equation}

\subsection{The ruled surface $\mathbb{P}(\sE)$}\label{subsec: ruled surface}
In this subsection we will study the ruled surface $$\rho: \mathbb{P}(\sE):=\mathrm{Proj}_{\cO_C}(\mathrm{Sym}(\sE))\to C.$$  As mentioned in \S~\ref{Subsec: outline}, this ruled surface   should  admit a special configuration of two   smooth irreducible divisors $\Sigma_1,\Sigma_2$. We now give their constructions in the following.

Firstly, the exact sequence (\ref{ext: ext of E}) gives a section divisor $\Sigma_1\in |\cO_{\mathbb{P}(\sE)}(1)|$, which  is locally defined by $X_i=0$ on $\mathbb{P}_{U_i}(\sE) (=\mathrm{Proj}(\cO_{U_i}[X_i,Y_i])) $. Then
\begin{itemize}
\item $\Sigma_1$ is a section of $\rho$; and

\item $\cO_{\Sigma_1}(1)=\rho^*\cL|_{\Sigma_1}, \Sigma_1^2=\deg \cL=(2g(C)-2)/q >0$ . In particular, $\Sigma_1$ is nef and big.
\end{itemize}

Secondly, the restriction of the multiplicative map to $\cO_C\otimes_{\cO_C} F^n_* \cO_C$  $$\iota: \sE\otimes_{\cO_C} F^n_* \cO_C \hookrightarrow F^n_*\cO_C\otimes_{\cO_C} F^n_*\cO_C \xrightarrow{multiplication} F^n_*\cO_C$$
gives a splitting of (\ref{ext: ext of E}) after tensoring with $F^n_*\cO_C$ as the left diagram below. This splitting then gives a map $\nu$ as the right commutative diagram below such that $\nu^*\cO_{\mathbb{P}(\sE)}(1)\simeq \cO_C$.
$$
\xymatrix{
\cO_C \otimes F^n_*\cO_C \ar[d]^{\simeq}\ar@{^{(}->}[rr] && \sE\otimes F^n_* \cO_C \ar[d]^\iota \\
F^n_*\cO_C      \ar@{=}[rr] && F^n_*\cO_C
}
\,\,\, \, \xymatrix{
C\ar[rrd]_{F^n} \ar[rrrr]^\nu &&&& \mathbb{P}(\sE) \ar[lld]^\rho\\
&& C
}
$$
In fact, this $\iota$ is nothing but a surjective map $(F^n)^*\sE\to \cO_C$ (here $C$ is the source of the $n$-th Frobenius map). Thus $\iota$ induces a morphism $\nu$ as above so that $\nu^*\cO_{\mathbb{P}(\sE)}(1)\simeq \cO_C$. We simply take $\Sigma_2:=\nu(C)$ and it follows from
$$
0=\deg_C(\nu^*\cO_{\mathbb{P}(\sE)}(1))=\deg(\nu)\cdot (\Sigma_1\cdot \Sigma_2)
$$
that  $\Sigma_1\cap \Sigma_2=\emptyset$.
More concretely, on $U_i$, $\iota$ is given by $X_i\mapsto 1$ and $Y_i\mapsto \sqrt[q]{z_i}$. Correspondingly, on $\mathbb{P}_{U_i}(\sE) (=\mathrm{Proj}(\cO_{U_i}[X_i,Y_i]))$, $\Sigma_2$ is defined by $Y_i^q - z_iX_i^q=0$ while $\Sigma_1$ is defined by $X_i=0$. Moreover, from these equations we see that $\Sigma_1\cap \Sigma_2=\emptyset$, and the projective $\Sigma_2 \to C$ coincides with $F^n: C\to C$ (cf. \S \ref{sec:vectorbundle}).

In summation, we have
\begin{itemize}
\item $\Sigma_2$ is smooth and $\rho|_{\Sigma_2}: \Sigma_2\to C$ is the $n$-th iterated Frobenius map of $C$;
\item $\Sigma_1\cap \Sigma_2=\emptyset$; and
\item  $\Sigma_2\in |\cO_{\mathbb{P}(\sE)}(q)\otimes \rho^*\omega_{C}^{-1}|$. In fact, we have $\Sigma_{2}\in |\cO_{\mathbb{P}(\sE)}(q)\otimes \rho^*\mathcal{H}|$ for some invertible sheaf $\mathcal{H}$ on $C$. As $\Sigma_2\cap \Sigma_1=\emptyset$, we have $(\cO_{\mathbb{P}(\sE)}(q)\otimes \rho^*\mathcal{H})|_{\Sigma_1}=\cO_{\Sigma_1}$ and hence $\rho^*\mathcal{H}\simeq \cO_{\Sigma_1}(-q)\simeq \rho^*\cL^{-q}\simeq \rho^*\omega_C^{-1}$.
\end{itemize}

So far, we have given the configuration as promised in  \S~\ref{Subsec: outline}.

\subsection{A generalized Raynaud's surface}\label{Subsec: generalized raynaud}
Given the triple pair $(\mathbb{P}(\sE),\Sigma_1,\Sigma_2)$ as above, we shall construct a generalization of Raynaud's surface.

In the following we assume in the defining equation~(\ref{defining equation}) of $C$, the free integral variable $e$ is such that
\begin{enumerate}[($\blacksquare$)]
\item $\,\,\,\,\,\,\,\,\,\,\,\,\,\,\,\,\,\,\,\,\,\,\,\,\,\,\,\,\,\,\,\,\,\, (q+1)\mid qe(qe-3)=2g(C)-2$.
\end{enumerate}
This assumption can be easily fulfilled, {\it e.g.}, by taking $(q+1)\mid e$. Then denote the invertible sheaf   $\cN:=\cO_C(\dfrac{qe(qe-3)}{q+1}\cdot \infty)$, and we have $$\cN^{q+1}\simeq \cO_C(qe(qe-3)\cdot \infty)\simeq \omega_C$$ by (\ref{equ: formula of K_C}).
As a result, the invertible sheaf $\cM:=\cO_{\mathbb{P}(\sE)}(1)\otimes\rho^*\cN^{-1}$ satisfies $$
\cM^{q+1}=\cO_{\mathbb{P}(\sE)}(q+1)\otimes \rho^*\omega^{-1}_{C} \simeq \cO_{\mathbb{P}(\sE)}(\Sigma_1+\Sigma_2).$$
It is well known the data $(\cM,\Sigma_1+\Sigma_2)$ gives a finite flat $(q+1)$-cyclic cover $$\pi: S=\mathrm{Spec}_{\cO_{\mathbb{P}(\sE)}}(\cO_{\mathbb{P}(\sE)} \oplus \cM^{-1} \oplus \cdots \oplus \cM^{-q})\to \mathbb{P}(\sE)$$ branched along $\Sigma:=\Sigma_1+\Sigma_2$.
Here the $\cO_{\mathbb{P}(\sE)}$-algebra structure of
$$\cO_{\mathbb{P}(\sE)}\oplus \cM^{-1} \oplus \cdots \oplus \cM^{-q}$$
 is  defined by the embedding $\mathcal{M}^{-q-1} \simeq \mathcal{O}_X(-\Sigma_1-\Sigma_2) \hookrightarrow \cO_X$.

$$\xymatrix{
S\ar[rrrr]^\pi \ar[rrd]_f &&&& \mathbb{P}(\sE) \ar[lld]^\rho\\
&&C
}$$

Since  $\Sigma$ is smooth, the surface $S$ is also smooth.
\begin{rmk}
 In the above construction, we can also replace the $(q+1)$-cyclic cover by  an $s$-cyclic cover with $s\mid (q+1)$ such that the branched divisor remains $\Sigma$. Raynaud's surfaces given in \cite{Ra78} are exactly the above surfaces with $n=1$ and $s=2$ if $p\neq 2$ and $s=3$ if $p=2$.
\end{rmk}

Since $\pi$ is branched along $\Sigma_i, i=1,2$, we have divisors $\Gamma_i\subseteq S$ such that $\pi^*\Sigma_i=(q+1)\Gamma_i$. In particular, the divisor $\Gamma_1$ is again nef and big (cf. \S~\ref{subsec: ruled surface}) and hence $\Gamma_1+f^*R$ is ample for any ample divisor $R$ on $C$.

Finally, we have
\begin{equation}\label{equ: KS}
 \begin{split}
\omega_{S}& \simeq \pi^*(\omega_{\mathbb{P}(\sE)}\otimes \cM^{q})\simeq \pi^*(\cO_{\mathbb{P}(\sE)}(q-2)\otimes \rho^*(\cL\otimes \mathcal{N}))\\
          &\simeq\pi^*(\cO_{\mathbb{P}(\sE)}(q-2)\otimes \rho^*(\cL^2(-l\infty)))
\end{split}
\end{equation}
$$$$
where $$l:=\dfrac{e(qe-3)}{(q+1)}$$ and the third ``$\simeq$''  is due to $\cL\simeq \cO_C(e(qe-3)\cdot\infty)$ and
\begin{equation}\label{equ: equation for l}
\cL^{-1}\otimes \cN\simeq \cO_C((-\dfrac{qe(qe-3)}{q}+\dfrac{qe(qe-3)}{q+1})\cdot \infty )=\cO_C(-l\cdot \infty).
\end{equation}

\smallskip

\section{Adjoint linear systems on generalized Raynaud's surfaces}\label{Sec: adjoint linear systems}
We keep the notations in the previous section. For any ample divisor $R$ on $C$, we set $A_R=\Gamma_1+f^*R$. As   aforementioned, this divisor is ample. The main result is
\begin{thm}\label{thm: main1}
Assume in the defining equation~(\ref{defining equation}) of $C$, the free integral variable $e$ satisfies  \begin{enumerate}[($\bigstar$)]
\item $\,\,\,\,\,\,\,\,\,\,\,\,\,\,\,\,\,\,\,\,\,\,\,\,\,\,\,\,\,\,\,\,\,\,l- (q-1)=\dfrac{e(qe-3)}{q+1}-q + 1\ge e(q-2)$.
\end{enumerate} Then for any $1\le m\le q$ there is an open dense subset $\mathcal{U}_m\subseteq C$ such that for any $Q\in \mathcal{U}_m$ and any Cartier divisor $R$ on $C$ of degree one  satisfying $mR\sim (m-1)\cdot \infty +Q$, the adjoint systems $|K_S+mA_{R}|$ has a base point $\Gamma_2\cap f^{-1}(Q)$.
\end{thm}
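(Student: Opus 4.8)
The plan is to show directly that every global section of $L:=\cO_S(K_S+mA_{R_Q})$ vanishes at the base point $x:=\Gamma_2\cap f^{-1}(Q)$, which lies over $\Sigma_2\cap\rho^{-1}(Q)$. Since $x\in\Gamma_2$, I would first use the restriction sequence $0\to\cO_S(L-\Gamma_2)\to\cO_S(L)\to\cO_{\Gamma_2}(L|_{\Gamma_2})\to 0$: any section in the image of $H^0(S,L-\Gamma_2)$ vanishes along all of $\Gamma_2$, hence at $x$, so it suffices to prove that the image of the restriction map $H^0(S,\cO_S(L))\to H^0(\Gamma_2,\cO_{\Gamma_2}(L|_{\Gamma_2}))$ consists of sections vanishing at $x$.

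Next I would identify $L|_{\Gamma_2}$ and this image explicitly. Using $\Gamma_1\cdot\Gamma_2=0$ (so $m\Gamma_1$ restricts trivially to $\Gamma_2$), the triviality of $\cO_{\mathbb{P}(\cE)}(1)|_{\Sigma_2}$ forced by $\Sigma_1\cap\Sigma_2=\emptyset$, the fact that $\rho|_{\Sigma_2}$ and hence $f|_{\Gamma_2}$ is the $n$-th iterated Frobenius (so every line bundle pulled back from $C$ becomes a $q$-th power on $\Gamma_2\cong C$), and the relation $\cL^q\cong\omega_C$, together with the canonical bundle formula and the hypothesis $mR_Q\sim(m-1)\infty+Q$, I obtain $L|_{\Gamma_2}\cong M^{\otimes q}$, where $M:=\cO_C\big((2\deg\cL-l+m-1)\infty+Q\big)$ on $\Gamma_2\cong C$ and $x$ corresponds to $Q$. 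To compute the image I would decompose $\pi_*\cO_S(L)$ into its $\mu_{q+1}$-eigensheaves; because the cyclic coordinate cutting out $\Gamma_2$ vanishes on $\Gamma_2$, every eigencomponent except the invariant one restricts to zero, and the invariant one identifies the image of the restriction map with $H^0(C,\mathrm{Sym}^{q-2}\cE\otimes M)$, sitting inside $H^0(\Gamma_2,M^{\otimes q})=H^0(C,M^{\otimes q})$ via the inclusion $\mathrm{Sym}^{q-2}\cE\otimes M\hookrightarrow F^n_*\cO_C\otimes M=F^n_*(M^{\otimes q})$ induced by $\cE\subseteq F^n_*\cO_C$. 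Equivalently, using $\det$-computations this image is the kernel of the map $H^0(M^{\otimes q})\to H^0(\cL^{q-1}\otimes M)$ attached to $0\to\mathrm{Sym}^{q-2}\cE\otimes M\to F^n_*(M^{\otimes q})\to\cL^{q-1}\otimes M\to 0$.

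It then remains to show that $H^0(C,\mathrm{Sym}^{q-2}\cE\otimes M)$, viewed inside $H^0(C,M^{\otimes q})$, consists of sections vanishing at $x\leftrightarrow Q$. I would evaluate at $Q$ by factoring the evaluation through the pointwise quotient of $\cE\otimes k(Q)$ cut out by the direction of the multisection $\Sigma_2$ at $Q$, so that $\mathrm{ev}_x$ becomes $H^0(\mathrm{Sym}^{q-2}\cE\otimes M)\to\mathrm{Sym}^{q-2}(\cE\otimes k(Q))\otimes M\to \ell_Q^{\,q-2}\otimes M|_Q$, and the goal is to prove this composite vanishes for every $Q$ in a dense open $\mathcal U_m$. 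Using the filtration of $\mathrm{Sym}^{q-2}\cE$ with graded pieces $\cL^0,\dots,\cL^{q-2}$ coming from $0\to\cO_C\to\cE\to\cL\to 0$, I would reduce this to controlling the sections of the twists $\cL^{j}\otimes M$ and the multiplication maps between them; here the hypothesis $l-q-1\ge e(q-2)$ (a largeness condition on $e$) should furnish exactly the degree/vanishing bound guaranteeing that the only sections surviving $\mathrm{ev}_x$ come from the subsheaf $\cO_C(\,\cdot\,\infty)\subseteq M$ that is regular at $Q$, whose $q$-th powers vanish at $x$; the genericity of $Q\in\mathcal U_m$ disposes of the finitely many points where $z(Q)=0$ or the pointwise evaluation degenerates.

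The main obstacle, I expect, is precisely this last step: controlling the image of $H^0(\mathrm{Sym}^{q-2}\cE\otimes M)$ in $H^0(M^{\otimes q})$ at the Frobenius point $x$. The difficulty is genuine, since $M=\cO_C(\cdots\infty+Q)$ does allow a pole at $Q$, so a generic section of $M^{\otimes q}$ need \emph{not} vanish at $x$ (it may have pole of order exactly $q$ there); one must exploit the strict inclusion $\mathrm{Sym}^{q-2}\cE\subsetneq F^n_*\cO_C$ — the absence of the top Frobenius component $\cL^{q-1}$ — together with $(\bigstar)$ to rule out every global section of $L$ whose restriction to $\Gamma_2$ attains this pole order. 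This is exactly where Raynaud's mechanism, namely the failure of Kodaira-type vanishing encoded in $\cE\subsetneq F^n_*\cO_C$, is doing the essential work, and making the bound $(\bigstar)$ interact correctly with the Frobenius filtration is the crux of the argument.
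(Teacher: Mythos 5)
You have correctly reconstructed the reduction step, and it is essentially the paper's own: your restriction to $\Gamma_2$ combined with the $\mu_{q+1}$-eigensheaf decomposition, with all non-invariant summands vanishing along $\Gamma_2$, is exactly Corollary~\ref{cor: later use}, and your identification of the surviving data with $H^0(C,S^{q-2}(\cE)\otimes M)$, where $M=\cL\otimes\cN((m-1)\infty+Q)=\cL^2(-\Delta+Q)$, viewed inside $H^0(C,M^{\otimes q})$ through $\cE\subseteq F^n_*\cO_C$, is a valid repackaging of Proposition~\ref{Prop: main}; your exact sequence $0\to S^{q-2}(\cE)\otimes M\to F^n_*(M^{\otimes q})\to\cL^{q-1}\otimes M\to0$ is correct. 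But the proposal stops exactly where the theorem begins: the assertion that for $Q$ in a dense open set every such section vanishes at the point of $\Gamma_2$ over $Q$ \emph{is} the whole content of Proposition~\ref{Prop: main}, and the mechanism you sketch for it would not work as described. A global section of $S^{q-2}(\cE)\otimes M$ admits no decomposition into the graded pieces $\cL^j\otimes M$ --- the filtration $S^0(\cE)\subseteq\cdots\subseteq S^{q-2}(\cE)$ does not split (its nontrivial extension classes are precisely what Raynaud's construction exploits) --- so no degree count on the twists $\cL^j\otimes M$ and the multiplication maps between them can detect the vanishing at $x$; as you note yourself, the one-dimensional pole allowance at $Q$ inside $M$ must be excluded by a genuinely global cohomological argument, and you supply none.

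What is missing, concretely, is the paper's descending induction $(\clubsuit)$ on the quotients $\mathcal{S}^r=(S^{q-2}(\cE)/S^{r-1}(\cE))\otimes\cL^2$: the statement that $\otimes s_Q$ is an isomorphism on $H^0(\mathcal{S}^r(-\Delta))$ propagates down the filtration provided (i) $\otimes s_Q$ is an isomorphism on $H^0(\cL^{r+2}(-\Delta))$ for every $Q\neq\infty$ (Lemma~\ref{lem:fundamental}, proved by Serre duality and base-point-freeness of $|\cL^{q-i}(\Delta)|$), and (ii) the connecting map $\phi\colon H^0(\mathcal{S}^{r+1}(-\Delta))\to H^1(\cL^{r+2}(-\Delta))$ is \emph{not} surjective. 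Point (ii) is the crux, and it is the only place where $(\bigstar)$ enters: the paper exhibits the explicit Serre-dual element $\omega=\mathrm{d}z_1\otimes\eta_1^{-r-2}$ and verifies by a \v{C}ech computation on the covering $\{U_1,U_2\}$ that it annihilates $\mathrm{Im}(\phi)$, the decisive estimate being $v_\infty(y^N\beta^{q-r-2})=N-e(q-r-2)\ge N-e(q-2)\ge0$, which is exactly the hypothesis $(\bigstar)$. The dense open set $\mathcal{U}_m$ is then manufactured step by step as the complement of the zero locus $Z(\omega_0)$ of a dual vector $\omega_0$ killing $\mathrm{Im}(\phi)$, which forces $\ker(\iota_Q)\cap\mathrm{Im}(\phi)=0$ for the map $\iota_Q=\otimes s_Q$ on $H^1$; in your write-up $\mathcal{U}_m$ is merely asserted (``genericity disposes of the finitely many bad points''). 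In short: your first two paragraphs are a faithful and correct formulation of the problem, equivalent to the paper's, but the essential step --- the one you explicitly flag as ``the crux'' --- is absent, and the route you gesture at (pointwise evaluation through $\ell_Q^{\,q-2}$ plus degree bounds on graded pieces) cannot replace the inductive non-surjectivity argument, so the proposal does not constitute a proof.
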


The condition $(\bigstar)$ is easily satisfied by taking $e\gg q$, and the existence of $R$ for every $Q$ is guaranteed by the divisibility of $\mathrm{Pic}^0(C)$.  We will see that the condition $(\bigstar)$ is just needed to guarantee a certain element to be zero in $\check{H}^1(\mathcal{U}, \omega_{C})$ when we prove the non-surjectivity of $\phi$ in Lemma~\ref{non-surjectivity}.
The rest part of this section is devoted to proving Theorem~\ref{thm: main1}.

\subsection{A non-free criterion}
From the construction of $S$, we have $$\pi_*\cO_S=\cO_{\mathbb{P}(\sE)}\bigoplus \cM^{-1} \bigoplus \cdots \bigoplus \cM^{-q}.$$
Moreover, we have the following proposition on the decomposition of certain ideal sheaves.
\begin{prop}[cf. \protect{\cite[Prop.~3.3]{Zhe17}}]\label{prop:zheng} For $1\leq r<q+1$ the ideal sheaf $\cO_S(-r\Gamma_1)$ has a decomposition
$$\pi_{*} \mathcal{O}_S(-r \Gamma_1)=(\bigoplus\limits_{i=0}^{r-1} \cM^{-i}(-\Sigma_1))\bigoplus (\bigoplus\limits_{i=r}^{q} \cM^{-i})\subseteq \pi_*\cO_S=\bigoplus\limits_{i=0}^{q} \cM^{-i}$$
which is compatible with the decomposition  of $\pi_{*} \mathcal{O}_{S}$.
\end{prop}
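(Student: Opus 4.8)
The plan is to check the decomposition locally on $X:=\mathbb{P}(\cE)$ and to organize the local pieces by means of the Galois action. Since $\pi$ is finite, $\pi_*$ is exact and an identity of subsheaves of $\pi_*\cO_S$ may be verified on an affine open cover of $X$. The decisive structural fact is that $\pi$ is a $(q+1)$-cyclic cover with $q+1$ prime to $p$ (indeed $q+1\equiv 1 \bmod p$), so the cyclic group $G=\bZ/(q+1)$ acts on $S$ over $X$, and the given splitting $\pi_*\cO_S=\bigoplus_{i=0}^{q}\cM^{-i}$ is exactly its decomposition into character eigensheaves, with $\cM^{-i}$ the weight-$i$ summand. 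Because $\Gamma_1$ is the reduced preimage $\pi^{-1}(\Sigma_1)_{\mathrm{red}}$, it is $G$-invariant, hence so is the ideal sheaf $\cO_S(-r\Gamma_1)$. Therefore $\pi_*\cO_S(-r\Gamma_1)$ is a $G$-stable subsheaf of $\pi_*\cO_S$ and automatically decomposes as the direct sum of its intersections with the eigensheaves $\cM^{-i}$; in particular the resulting decomposition is compatible with that of $\pi_*\cO_S$, and it only remains to identify each weight-$i$ intersection.

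To compute the weight-$i$ piece I work locally. Away from $\Sigma_1$ the divisor $\Gamma_1$ is empty, so $\cO_S(-r\Gamma_1)=\cO_S$ and $\cM^{-i}(-\Sigma_1)=\cM^{-i}$, and there is nothing to check. Near a point of $\Sigma_1$, which is disjoint from $\Sigma_2$ by construction, I pick a local generator $w$ of $\cM^{-1}$ and a local equation $t$ of $\Sigma_1$; since the local equation of $\Sigma_2$ is a unit there, the defining relation $\cM^{-(q+1)}\cong\cO_X(-\Sigma)\hookrightarrow\cO_X$ of the cover reads $w^{q+1}=t$ after absorbing a unit. Thus locally $\cO_S=\cO_X[w]/(w^{q+1}-t)$, and, using that $S$ is smooth, $\Gamma_1=\{w=0\}$ is a smooth divisor with local equation $w$; consequently $\cO_S(-r\Gamma_1)=w^{r}\cO_S$.

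The final step is to rewrite $w^{r}\cO_S$ in the free $\cO_X$-basis $1,w,\dots,w^{q}$. Expanding $w^{r}\cdot w^{i}$ for $0\le i\le q$ and reducing by $w^{q+1}=t$ gives $w^{r+i}$ when $r+i\le q$ and $t\,w^{\,r+i-(q+1)}$ when $r+i\ge q+1$. Collecting the coefficient of each $w^{j}$, the weight-$j$ component of $w^{r}\cO_S$ is the full summand $\cM^{-j}$ precisely when $j\ge r$, and equals $t\,\cM^{-j}=\cM^{-j}(-\Sigma_1)$ when $j<r$, the factor $t$ arising exactly from the reduced terms with $r+i\ge q+1$. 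This matches the asserted formula, and since the twist by $\cO_X(-\Sigma_1)$ is invisible away from $\Sigma_1$ it glues with the computation off $\Sigma_1$. The one place demanding care is precisely this bookkeeping of the relation $w^{q+1}=t$, which dictates which low-weight summands pick up the twist $-\Sigma_1$; once the Galois eigenspace decomposition is in hand, the rest is formal.
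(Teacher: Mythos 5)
Your proof is correct, and the paper in fact offers no argument of its own here---it simply cites \cite[Prop.~3.3]{Zhe17}, whose proof is essentially the computation you give. Your two steps (the eigensheaf decomposition of $\pi_*$ under the cyclic group, legitimate since $q+1\equiv 1 \bmod p$ makes the cover tame, followed by the local bookkeeping in $\cO_X[w]/(w^{q+1}-t)$ showing the weight-$j$ piece of $w^r\cO_S$ is $\cM^{-j}(-\Sigma_1)$ for $j<r$ and $\cM^{-j}$ for $j\geq r$) are exactly the standard route to this statement, so nothing is missing.
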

\begin{cor}\label{cor: later use}
Let $\mathcal{F}$ be an invertible sheaf on $\mathbb{P}(\sE)$. Then for $1\leq r<q+1$ we have a decomposition
$$\pi_*(\pi^*\mathcal{F}(-r\Gamma_1))=(\bigoplus\limits_{i=0}^{r-1} \cM^{-i}\otimes \mathcal{F}(-\Sigma_1))\bigoplus (\bigoplus\limits_{i=r}^{q} \cM^{-i}\otimes \mathcal{F}) \subseteq  \bigoplus\limits_{i=0}^{q} \cM^{-i}\otimes \mathcal{F},$$
and the global sections of $\pi^*\mathcal{F}(-r\Gamma_1)$ from all but the first direct summand $\cM^0\otimes \mathcal F(-\Sigma_1)\simeq \mathcal{F}(-\Sigma_1)$ vanish along $\Gamma_2$.

In particular, if the first summand $\mathcal{F}(-\Sigma_1)$ as $\cO_{\mathbb{P}(\sE)}$-module  has a base point $Q_0\in \Sigma_2$, then $\pi^*\mathcal{F}(-r\Gamma_1)$ has a base point $\pi^{-1}(Q_0)\in \Gamma_2$.
\end{cor}
\begin{proof}
The decomposition follows immediately from Proposition \ref{prop:zheng} by the projection formula.
For simplicity we use $\widetilde{\mathcal{F}}_i, i=0,1,\cdots,q$ to denote the corresponding direct summand of $\pi_*(\pi^*\mathcal{F}(-r\Gamma_1))$ in the above decomposition. Then we have
$$H^0(S, \pi^*\mathcal{F}(-r\Gamma_1)) \simeq H^0(\mathbb{P}(\sE), \widetilde{\mathcal{F}}_0) \bigoplus H^0(\mathbb{P}(\sE), \bigoplus_{i>0} \widetilde{\mathcal{F}}_i).$$
For the remaining assertions, it suffices to show the sections from $H^0(\mathbb{P}(\sE), \bigoplus_{i>0} \widetilde{\mathcal{F}}_i)$ do not generate $\pi^*\mathcal{F}(-r\Gamma_1)$ at each point of $\Gamma_2$, or equivalently the natural homomorphism
$$H^0(\mathbb{P}(\sE), \bigoplus_{i>0} \widetilde{\mathcal{F}}_i) \otimes \pi_*\cO_S \to \pi_*(\pi^*\mathcal{F}(-r\Gamma_1))$$
is not surjective at each point of $\Sigma_2 = \pi(\Gamma_2)$.
In practice, we shall verify that the composition homomorphism
$$ \left(\bigoplus_{i=0}^q \cM^{-i}\right) \bigotimes \left(\bigoplus_{i=1}^q \widetilde{\mathcal{F}}_i\right)\to \pi_*(\pi^*\mathcal{F}(-r\Gamma_1)) \to \widetilde{\mathcal{F}}_0= \mathcal{F}(-\Sigma_1)$$
is not surjective at each point of $\Sigma_2$, where the second map denotes the projection to the first direct summand.
Keep in mind the compatibility of this decomposition of $\pi_*(\pi^*\mathcal{F}(-r\Gamma_1))$ as a $\pi_*\cO_S \cong \bigoplus\limits_{i=0}^{q} \cM^{-i}$-module. Locally we can get sections of $\widetilde{\mathcal{F}}_0= \mathcal{F}(-\Sigma_1)$ from the direct summands $\widetilde{\mathcal{F}}_i, i>0$ as follows:
\begin{itemize}
  \item when $1\leq i\leq r-1$, $\cM^{i-q-1}\otimes(\cM^{-i}\otimes \mathcal{F}(-\Sigma_1))\simeq\mathcal{F}(-2\Sigma_1-\Sigma_2)\subset \mathcal{F}(-\Sigma_1)$ as a sub-sheaf determined by tensor with the ideal sheaf $\mathcal {O}(-\Sigma_1-\Sigma_2)$;
  \item when $r\leq i\leq q$, $\cM^{i-q-1}\otimes(\cM^{-i}\otimes \mathcal{F})\simeq \mathcal{F}(-\Sigma_1-\Sigma_2)\subset \mathcal{F}(-\Sigma_1)$ as a sub-sheaf determined by tensor with the ideal sheaf $\mathcal {O}(-\Sigma_2)$.
\end{itemize}
We see that the local sections of $\widetilde{\mathcal{F}}_0$ from $\left(\bigoplus_{i=0}^q \cM^{-i}\right) \otimes(\bigoplus_{i=1}^q \widetilde{\mathcal{F}}_i) $ vanish along $\Sigma_2$ and conclude the result.
\end{proof}

Note that by (\ref{equ: KS}),  we have
\begin{equation}
\begin{split}
\cO_S(K_S+mA_{R})&=\pi^*(\cO_{\mathbb{P}(\sE)}(q-2)\otimes \rho^*(\cL\otimes\mathcal{N}(mR)))(m\Gamma_1)\\
               &=\pi^*(\cO_{\mathbb{P}(\sE)}(q-1)\otimes \rho^*(\cL\otimes\mathcal{N}(mR)))(-(q+1-m)\Gamma_1).
\end{split}
\end{equation}
Therefore, by Corollary~\ref{cor: later use},  Theorem~\ref{thm: main1} follows from the next proposition.
\begin{prop}\label{Prop: main}
Let $\mathcal{P}:=\cO_{\mathbb{P}(\sE)}(q-2)\otimes \rho^*(\cL\otimes \cN)$. When $(\bigstar)$ holds, there is an open dense subset $\mathcal{U}_m\subseteq C$ such that for every closed point $Q\in \mathcal{U}_m$,
the natural map
\begin{equation}
H^0(\mathbb{P}(\sE),\mathcal{P}\otimes \rho^*\cO_C((m-1)\cdot \infty))\stackrel{\otimes \rho^*s_Q}{\to}H^0(\mathbb{P}(\sE),\mathcal{P}\otimes \rho^*\cO_C((m-1)\cdot \infty+Q))
\end{equation}
is an isomorphism, where $s_Q\in H^0(C,\cO_C(Q))$ is a  section corresponding to $Q$.
\end{prop}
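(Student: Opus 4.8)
The plan is to descend the statement to the base curve $C$, where the map becomes multiplication by a section, and then to extract the needed bijectivity from the Frobenius presentation $\cE\subseteq F^n_*\cO_C$. First I would push forward along $\rho$. Since $q-2\ge 0$ one has $\rho_*\cO_{\mathbb{P}(\cE)}(q-2)=\mathrm{Sym}^{q-2}\cE$, so by the projection formula both groups in the statement live on $C$: putting $\mathcal{B}_m:=\cL\otimes\cN\otimes\cO_C((m-1)\infty)$ and $\mathcal{G}_m:=\mathrm{Sym}^{q-2}\cE\otimes\mathcal{B}_m$, we get
\[
H^0\big(\mathbb{P}(\cE),\mathcal{Q}\otimes\rho^*\cO_C((m-1)\infty)\big)\cong H^0(C,\mathcal{G}_m),\qquad H^0\big(\mathbb{P}(\cE),\mathcal{Q}\otimes\rho^*\cO_C((m-1)\infty+Q)\big)\cong H^0(C,\mathcal{G}_m(Q)),
\]
and $\otimes\rho^*s_Q$ becomes ordinary multiplication $\otimes s_Q$. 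It therefore suffices to show that $\otimes s_Q\colon H^0(C,\mathcal{G}_m)\to H^0(C,\mathcal{G}_m(Q))$ is bijective for $Q$ in a dense open set $\mathcal{U}_m$.

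Next I would use that $\mathrm{Sym}^{q-2}\cE$ is the subsheaf $\mathcal{W}=\bigoplus_{a=0}^{q-2}\cO_C\cdot\sqrt[q]{z^a}\subseteq F^n_*\cO_C$ of low powers, with quotient $\cL^{q-1}$. Tensoring $0\to\mathcal{W}\to F^n_*\cO_C\to\cL^{q-1}\to 0$ by $\mathcal{B}_m$ and using $\cL^{q-1}\otimes\mathcal{B}_m\cong\omega_C\otimes\cN\otimes\cO_C((m-1)\infty)$ (because $\cL^q\cong\omega_C$) gives
\[
0\to\mathcal{G}_m\to F^n_*\cO_C\otimes\mathcal{B}_m\xrightarrow{\ r_m\ }\omega_C\otimes\cN\otimes\cO_C((m-1)\infty)\to 0 .
\]
Via the projection formula $H^0(F^n_*\cO_C\otimes\mathcal{B}_m)\cong H^0(C,\mathcal{B}_m^{\,q})$ this realises $H^0(\mathcal{G}_m)$ as the kernel of the ``top coefficient'' (Cartier type) map $r_m$, and likewise $H^0(\mathcal{G}_m(Q))=\ker\big((r_m)_Q\big)$ from the $Q$-twisted sequence.

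The crucial point is the comparison of the untwisted and twisted sequences. The degrees recorded in Section~\ref{Sec: a generalization of Raynaud surface} give $\deg\mathcal{B}_m^{\,q}>2g(C)-2$ and $\deg(\omega_C\otimes\cN((m-1)\infty))>2g(C)-2$, so all the middle and right groups are nonspecial. Under $H^0(F^n_*\cO_C\otimes\mathcal{B}_m)\cong H^0(\mathcal{B}_m^{\,q})$ the vertical map $\otimes s_Q$ becomes multiplication by $F^{n*}s_Q=s_Q^{\,q}$, a section vanishing to order $q$ at $Q$, whereas on the right quotient (an honest line bundle, not a Frobenius pushforward) it remains $\otimes s_Q$, of order $1$. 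Applying the snake lemma to the two sequences, all three verticals being injective, yields
\[
0\to \mathrm{coker}\big(\otimes s_Q\text{ on }\mathcal{G}_m\big)\to \mathrm{coker}\big(\otimes s_Q^{\,q}\text{ on }H^0(\mathcal{B}_m^{\,q})\big)\to \mathrm{coker}\big(\otimes s_Q\colon \mathrm{im}\,r_m\to\mathrm{im}\,(r_m)_Q\big)\to 0 .
\]
The middle cokernel is the space of order-$q$ jets of $\mathcal{B}_m^{\,q}$ at $Q$, of dimension exactly $q$, so the map of the Proposition is an isomorphism if and only if $\dim\mathrm{im}\,(r_m)_Q-\dim\mathrm{im}\,r_m=q$; that is, twisting by $Q$ must enlarge the image of the Cartier map by the full $q$ dimensions. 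This is the order-$q$ versus order-$1$ asymmetry created by Frobenius.

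The main obstacle is precisely this last identity, a local statement at $Q$ about the top-graded-piece map: one must show that the $q$-dimensional jet space injects under $(r_m)_Q$ modulo $s_Q\cdot\mathrm{im}\,r_m$, equivalently that none of these jets is absorbed into the old image. This is where hypothesis $(\bigstar)$ is indispensable. The filtration governing $H^1(\mathcal{G}_m)$ (equivalently the sections of $\mathcal{G}_m^\vee\otimes\omega_C\cong(F^n_*\cO_C/\cO_C)\otimes\cN^{-1}(-(m-1)\infty)$) has successive quotients $\cO_C(c_k\cdot\infty)$ with $c_k=k\deg\cL-\deg\cN-(m-1)$, and $(\bigstar)$ says exactly that the least of them, $c_1=l-(m-1)$, is bounded below by $e(q-2)+q+1-(m-1)$. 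I expect this largeness of the pole orders to be what forces the Cartier map to gain the full $q$ dimensions for every $Q$ outside a proper closed subset, which one removes to obtain the dense open $\mathcal{U}_m$. The genuinely non-formal part of the proof should be this explicit control of the top-coefficient map via $(\bigstar)$; everything else is degree bookkeeping already carried out in Section~\ref{Sec: a generalization of Raynaud surface}.
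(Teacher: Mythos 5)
Your formal reduction is correct, and it is a genuinely different (and rather clean) packaging of the problem: instead of the paper's step-by-step filtration of $S^{q-2}(\cE)$, you embed $\mathrm{Sym}^{q-2}\cE$ into $F^n_*\cO_C$ in one stroke, with quotient $\cL^{q-1}$, identify the middle term via the projection formula as $H^0(C,\mathcal{B}_m^{\,q})$ where multiplication by $s_Q$ becomes multiplication by $s_Q^{\,q}$, and run the snake lemma; your nonspecialty computations and even the dual identification $\mathcal{G}_m^\vee\otimes\omega_C\cong(F^n_*\cO_C/\cO_C)\otimes\cN^{-1}(-(m-1)\infty)$ check out (the latter against the perfect pairing $F^n_*\cO_C\times F^n_*\cO_C\to\cL^{q-1}$ given by the top Frobenius coefficient). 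But the argument stops exactly where the actual content of the Proposition begins. The statement you reduce everything to --- that $\dim\mathrm{im}\,(r_m)_Q-\dim\mathrm{im}\,r_m=q$ for all $Q$ in a dense open subset, equivalently that the evaluation map $H^0\bigl(C,(F^n_*\cO_C/\cO_C)\otimes\cN^{-1}(-(m-1)\infty)\bigr)\to(F^n_*\cO_C/\cO_C)\otimes\cN^{-1}(-(m-1)\infty)\otimes k(Q)$ is surjective for generic $Q$ --- is supported only by ``I expect this largeness of the pole orders to be what forces\dots''. This is a genuine gap, not a verification left to the reader: positivity of the degrees $c_k$ of the graded quotients $\cL^k\otimes\cN^{-1}(-(m-1)\infty)$ does \emph{not} imply that sections of the quotients lift through the filtration, because the liftings are obstructed by connecting maps into the $H^1$ of the sub-bundles, and those $H^1$ are precisely the non-vanishing cohomology groups that this Raynaud-type configuration is engineered to produce (it violates Kodaira vanishing by design). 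No degree bookkeeping can close this; the obstruction has to be computed.

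For comparison, the paper proves exactly this point by a descending induction on the filtration $S^{r}(\cE)\subseteq S^{q-2}(\cE)$ with line-bundle graded pieces $\cL^{r+2}$: at each step the required input is that the connecting map $\phi\colon H^0(\mathcal{S}^{r+1}(-\Delta))\to H^1(\cL^{r+2}(-\Delta))$ is \emph{not} surjective, and this is established by an explicit \v{C}ech cocycle computation --- pairing $\mathrm{Im}(\phi)$ against the differential $\omega=\mathrm{d}z_1\otimes\eta_1^{-r-2}$, expanding via the translation relation $\sqrt[q]{z_1}=\alpha\sqrt[q]{z_2}+\beta$, and invoking the valuation bound $v_\infty(y^N\beta^{q-r-2})=N-e(q-r-2)\ge N-e(q-2)\ge0$. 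That last inequality is the entire role of $(\bigstar)$: it does real work \emph{inside} the cocycle computation, not merely as a nonspecialty hypothesis as in your sketch; the paper then converts non-surjectivity into the generic-$Q$ statement by choosing an annihilating $\omega_0$ and removing $Z(\omega_0)$, shrinking the open set at each of the $q-1$ steps. To complete your one-step variant you would have to produce the analogous explicit differentials annihilating $\mathrm{im}\,r_m$, or equivalently explicit global sections of $(F^n_*\cO_C/\cO_C)\otimes\cN^{-1}(-(m-1)\infty)$ spanning the generic fibre --- which amounts to redoing the paper's key lemma in your packaging. A minor further point: your description of $\mathrm{Sym}^{q-2}\cE$ as $\bigoplus_{a}\cO_C\cdot\sqrt[q]{z^a}$ holds only locally, since the transition $\sqrt[q]{z_1}=\alpha\sqrt[q]{z_2}+\beta$ is triangular rather than diagonal; this is harmless for your argument but should be phrased as a locally split filtered subsheaf, not a global direct sum.
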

In fact, this proposition says that  the fixed part of  the linear system
\begin{align*}
&|\cO_{\mathbb{P}(\sE)}(q-1)\otimes \rho^*(\cL\otimes\mathcal{N}(mR))(-\Sigma_1)|\\
=&|\cO_{\mathbb{P}(\sE)}(q-2)\otimes \rho^*(\cL\otimes\mathcal{N}(mR))|\\
=&|\mathcal{P}\otimes \rho^*\cO_C((m-1)\cdot \infty+Q)|
\end{align*}
contains $F_Q:=\rho^{-1}(Q)$ as a component (and hence admits the base point $F_Q\cap \Sigma_2$).

\subsection{Proof of Proposition~\ref{Prop: main}}
For simplicity, we set $$N:=\left(\dfrac{e(qe-3)}{q+1}-(m-1)\right)=(l+1-m)$$ and $\Delta:=N\cdot \infty$, and hence $\cL\otimes \cN((m-1)\cdot \infty)=\cL^2(-\Delta)$ by (\ref{equ: equation for l}).
An immediate consequence of the assumption $(\bigstar)$ is that
\begin{equation}\label{eq lower bound}
\deg \Delta=N\ge l+1-q \ge e(q-2).
\end{equation}

For Proposition~\ref{Prop: main}, it is equivalent to the following:
\begin{prop}\label{equ: key}
When $(\bigstar)$ holds, there is an open dense subset $\mathcal{U}_m\subseteq C$ such that for every closed point $Q\in \mathcal{U}_m$, the natural map
\begin{equation}
H^0(C, S^{q-2}(\sE)\otimes \cL^2(-\Delta))\stackrel{\otimes s_Q}{\to} H^0(C, S^{q-2}(\sE)\otimes \cL^2(-\Delta+Q))
\end{equation}
is an isomorphism.
\end{prop}

Note that we have the following filtration due to the exact sequence (\ref{ext: ext of E}),
\begin{equation}\label{Filtration of S^q}
S^{-1}(\sE):=0\subseteq S^0(\sE) \subseteq \cdots \subseteq S^{q-1}(\sE).
\end{equation}
In fact, the sheaf  $\sE$ is an extension of $\cL$ by $\cO_C$. So we have the natural ascending filtrations $S^r(\sE)\simeq S^r(\sE)\otimes \cO_C^{q-1-r}\subseteq S^{q-1}(\sE), r=-1,\cdots,q-1$ (see \cite[II, Ex.~5.16]{Har77}). Concretely, with the notation from \S \ref{sec:vectorbundle} the inclusion $S^{r-1}(\sE) \subset S^r(\sE)$ is induced by
$$X_i^{r-1-j}Y_i^j \mapsto X_i^{r-j}Y_i^j, j=0, \cdots r-1, ~\mathrm{on}~U_i.$$

One can also realize the above filtration equivalently in another way as follows. The inclusion $\sE\subseteq  F^n_*\mathcal{O}_C$ along with the $\cO_C$-algebra structure of $F^n_*\cO_C$ induces natural embeddings $S^r(\sE)\subseteq F_*^n\cO_C$ for $r\le q-1$. Concretely, by the construction of $\mathscr{E}$ the embedding can be described as follows:
\begin{equation}\label{Concrete Filtration of S^q}
S^r(\sE)|_{U_i}=\bigoplus\limits_{j=0}^r \cO_C\cdot X_i^{r-j}Y_i^j(=1^{r-j}\cdot \sqrt[q]{z_i^j})\subseteq F^n_*\cO_C|_{U_i}=\bigoplus\limits_{j=0}^{q-1}\cO_C\cdot \sqrt[q]{z_i^j},
\end{equation}
hence the filtration (\ref{Filtration of S^q}) coincides with the natural filtration of $F_*^n\cO_C$.

In either viewpoint, we have canonical isomorphisms $$S^r(\sE)/S^{r-1}(\sE)\simeq \cL^r, r=0,\cdots, q-1.$$  These isomorphisms, in terms of the second viewpoint, are locally given as
$Y_i^r (=\sqrt[q]{z_i^r})\mapsto \eta_i^r$ on $U_i,i=1,2$. We also consider the quotient sheaf $S^{q-2}(\sE)/S^{r-1}(\sE)$, which on $U_i$ is generated by $X_i^{q-2-j}Y_i^{j}=\sqrt[q]{z_i^j}, j=r,\cdots, q-2$.
Then from the filtration (\ref{Filtration of S^q}), we get the exact sequence:
\begin{equation}\label{ext: induction1}
0\to \cL^{r}(\simeq S^{r}(\sE)/S^{r-1}(\sE))\to  S^{q-2}(\sE)/S^{r-1}(\sE)\to S^{q-2}(\sE)/S^{r}(\sE)\to 0.
\end{equation}

\smallskip

\begin{lem}\label{lem: key}
Put $\mathcal{S}^r:=(S^{q-2}(\sE)/S^{r-1}(\sE))\otimes \cL^2, r=0,\cdots, q-2.$ Then for any $r=0,\cdots,q-2$, there is an open dense subset $\mathcal{V}_r\subseteq U_1=C\backslash \infty$ such that the natural maps
$$H^0(C,\mathcal{S}^r(-\Delta)) \stackrel{\otimes s_Q}\to H^0(C, \mathcal{S}^r(-\Delta+Q)) $$
are isomorphisms for all $Q\in \mathcal{V}_r$.
\end{lem}
Note that the assertion in this lemma when $r=0$ is all what we need (see (Proposition~\ref{equ: key})). We are going to prove this lemma by a descending induction on $r$. And before running the induction process, we give the following lemma.
\begin{lem}\label{lem:fundamental}
For any $0\le i\le q$, the natural map
$$H^0(C,\cL^i(-\Delta))\stackrel{\otimes s_Q}{\to} H^0(C,\cL^i(-\Delta+Q))$$ is an isomorphism for any $Q\neq \infty$.
\end{lem}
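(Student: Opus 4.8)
The plan is to reduce the statement, via the fiber-at-$Q$ short exact sequence and Serre duality, to a base-point-freeness assertion for a line bundle supported entirely at $\infty$, which is then immediate from the canonical section.

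First I would fix $0\le i\le q$ and a point $Q\neq \infty$. Multiplication by $s_Q$ is automatically injective, so only surjectivity is in question. Twisting the structure sequence $0\to \cO_C(-Q)\to \cO_C\to k(Q)\to 0$ by $\cL^i(-\Delta+Q)$ gives
$$0\to \cL^i(-\Delta)\xrightarrow{\cdot s_Q} \cL^i(-\Delta+Q)\to k(Q)\to 0,$$
and the associated long exact cohomology sequence reads
$$0\to H^0(\cL^i(-\Delta))\xrightarrow{\otimes s_Q} H^0(\cL^i(-\Delta+Q))\to k\xrightarrow{\delta} H^1(\cL^i(-\Delta))\xrightarrow{\beta} H^1(\cL^i(-\Delta+Q))\to 0.$$
By exactness, $\otimes s_Q$ is surjective (hence an isomorphism) precisely when $\delta\neq 0$, and since $\ker\beta=\operatorname{im}\delta$, this is equivalent to $\beta$ being non-injective.

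Next I would dualize. Using $\omega_C\cong\cL^q$ from \eqref{equ: formula of L}, Serre duality identifies $\beta$ with the dual of the multiplication map
$$H^0(\cL^{q-i}(\Delta-Q))\xrightarrow{\cdot s_Q} H^0(\cL^{q-i}(\Delta)),$$
whose image is exactly the subspace of sections of $\cL^{q-i}(\Delta)$ vanishing at $Q$. Thus $\beta$ is non-injective if and only if this inclusion is \emph{not} surjective, i.e. if and only if $Q$ is not a base point of the complete linear system $|\cL^{q-i}(\Delta)|$. Writing $j=q-i$, the lemma becomes the assertion that for every $0\le j\le q$ and every $Q\neq\infty$, the point $Q$ is not a base point of $|\cL^{j}(\Delta)|$.

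This last assertion is immediate, which is the whole point of the reformulation. Since $\cL\cong\cO_C(e(qe-3)\cdot\infty)$ and $\Delta=N\cdot\infty$, we have $\cL^{j}(\Delta)\cong\cO_C(d_j\cdot\infty)$ with $d_j=j\,e(qe-3)+N\ge 0$, using $N\ge 0$ from \eqref{eq lower bound}. This bundle carries the canonical section $s_\infty$, the image of $1$ under $\cO_C\hookrightarrow\cO_C(d_j\cdot\infty)$, whose zero divisor is exactly $d_j\cdot\infty$; hence $s_\infty(Q)\neq 0$ for every $Q\neq\infty$, so $Q$ is never a base point. This settles all $i$ and all $Q\neq\infty$ simultaneously. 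The one genuine idea, rather than any hard estimate, is the choice of dualization: after passing to $\cL^{q-i}(\Delta)$ the relevant bundle is supported at the single point $\infty$, so the canonical section avoids every $Q\neq\infty$ at once — this is exactly why the conclusion holds for \emph{all} such $Q$ with no genericity hypothesis, in contrast to a naive approach that would only control generic $Q$.
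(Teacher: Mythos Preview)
Your proof is correct and follows essentially the same route as the paper: both reduce, via Serre duality and $\omega_C\cong\cL^q$, to showing that $Q\neq\infty$ is not a base point of $|\cL^{q-i}(\Delta)|\cong|\cO_C(((q-i)e(qe-3)+N)\cdot\infty)|$, which is immediate from the canonical section supported at $\infty$. The only cosmetic difference is that you spell out the long exact sequence and the equivalence $\delta\neq 0\Leftrightarrow\beta$ non-injective explicitly, whereas the paper compresses this step by invoking Riemann--Roch to compare dimensions.
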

\begin{proof}
Recall that $\cL^{q} \sim K_C$. By Riemann-Roch formula and Serre duality, it suffices to prove the natural map
$$ H^1(C,\cL^i(-\Delta+Q))^{\vee} \cong H^0(C, \cL^{q-i}(\Delta-Q))\stackrel{\otimes s_Q}{\to} H^0(C, \cL^{q-i}(\Delta)) \cong H^1(C,\cL^i(-\Delta))^{\vee}$$
is not an isomorphism for all $Q\neq \infty$. This is true because $Q$ is not a base point of $|\cL^{q-i}(\Delta)|=| ((q-i)e(qe-3)+N)\cdot \infty|$.
\end{proof}
\begin{proof}[Proof of Lemma~\ref{lem: key}]
First set $r=q-2$, then we have $\mathcal{S}^{q-2}=\cL^{q}$. So Lemma~\ref{lem:fundamental} with $i=q$ gives Lemma~\ref{lem: key} for $r=q-2$ and $\mathcal{V}_{q-2}$ can be taken as $U_1=C\backslash \infty$.

We argue by a descending induction on $r$. Now assume the statement holds for $r+1\leq q-2$. Let's consider the case $r$.
 By tensoring (\ref{ext: induction1}) with $\cL^2$ we obtain the exact sequence
\begin{equation}\label{ext: induction}
0\to \cL^{r+2}\to  \mathcal{S}^r\to \mathcal{S}^{r+1}\to 0.
\end{equation}

Then tensoring the above exact sequence with $\cO_C(-\Delta)$ and $\cO_C(-\Delta+Q)$ for $Q\in \mathcal{V}_{r+1}$, we obtain the following commutative diagram of exact sequences:
{\begin{equation*}
\xymatrix{
0\to \cL^{r+2}(-\Delta) \ar[d]\ar[r] & \mathcal{S}^{r}(-\Delta) \ar[d]\ar[r] &\mathcal{S}^{r+1}(-\Delta)\to 0 \ar[d]\\
0\to \cL^{r+2}(-\Delta+Q)     \ar[r] & \mathcal{S}^{r}(-\Delta+Q)\ar[r]      &\mathcal{S}^{r+1}(-\Delta+Q)\to 0.
}
\end{equation*}}
Taking the cohomology of the above diagram we obtain$$
\Small{\xymatrix{H^0(\cL^{r+2}(-\Delta))\ar[r]\ar[d]^{\otimes s_Q}_{\cong} & H^0(\mathcal{S}^{r}(-\Delta)) \ar[r]\ar[d]^{\alpha_r} &H^0(\mathcal{S}^{r+1}(-\Delta))\ar[r]^{\phi}\ar[d]^{\alpha_{r+1}}_{\cong} & H^1(\cL^{r+2}(-\Delta)) \ar[d]^{\otimes s_Q}\\
H^0(\cL^{r+2}(-\Delta+Q))\ar[r]     &H^0(\mathcal{S}^{r}(-\Delta+Q))\ar[r]   &H^0(\mathcal{S}^{r+1}(-\Delta+Q))\ar[r]^{\phi_Q} &H^1(\cL^{r+2}(-\Delta+Q))\\
}}$$
where the horizontal sequences are exact, the leftmost vertical isomorphism follows from Lemma~\ref{lem:fundamental} and $\alpha_{r+1}$ is an isomorphism by the inductive assumption on $r+1$.

Now we want to find some open dense subset $\mathcal{V}_r\subseteq \mathcal{V}_{r+1}$ such that $\alpha_r$ is an isomorphism for any $Q\in\mathcal{V}_r$. In fact, we only need to verify that $\alpha_r$ is surjective. Applying the Five Lemma, it suffices to verify that the map $\mathrm{Im}(\phi) \xrightarrow{\otimes s_Q} H^1(C,\cL^{r+2}(-\Delta+Q))$ is injective, which is equivalent to the following condition
\begin{itemize}
\item
the intersection of $W:=\mathrm{Im}(\phi)\subseteq H^1(C, \cL^{r+2}(-\Delta))$ with the kernel of the map $ \iota_Q: H^1(C,\cL^{r+2}(-\Delta)) \xrightarrow{\otimes s_Q} H^1(C,\cL^{r+2}(-\Delta+Q))$ is zero, for any  $Q\in \mathcal{V}_r\subseteq \mathcal{V}_{r+1}$.
\end{itemize}

We claim that   \begin{itemize}
                 \item if $W\subset H^1(C, \cL^{r+2}(-\Delta))$ is a proper subspace, then there exists an open dense subset $\mathcal{V}_r\subseteq \mathcal{V}_{r+1}$ such that $\mathrm{Ker}(\iota_Q)\cap W=0$ for every point $Q \in \mathcal{V}_{r}$.
               \end{itemize}

 In fact, if $W \subseteq H^1(C, \cL^{ r+2}(-\Delta))$ is a proper subspace, by Serre duality we can take a nonzero section
$$0\neq \omega_0 \in H^0(C,\cL^{q-r-2}(\Delta)) \cong H^1(C,\cL^{r+2}(-\Delta))^\vee$$
such that the pairing
$$<\omega_0, w>=0,\hspace{.5cm}\forall \, \, w \in W.$$
Let $Z(\omega_0)$ be the set of zeros of $\omega_0$. We take
$$\mathcal{V}_r = (C \setminus Z(\omega_0)) \cap \mathcal{V}_{r+1}.$$
Then for any fixed point $Q \in \mathcal{V}_r$, $\omega_0$ is not contained in the linear subspace $H^0(\cL^{q-r-2}(\Delta-Q))\otimes s_Q \subseteq H^0(\cL^{q-r-2}(\Delta))$, and also note that this subspace is of codimension one, so we have
\begin{align}\label{eq-span}H^0(\cL^{q-r-2}(\Delta)) = (H^0(\cL^{q-r-2}(\Delta - Q))\otimes s_Q) \bigoplus k\cdot \omega_0.\end{align}
 Then for any $s\in \mathrm{ker }(\iota_Q) \cap W$, by definition we can check that
\begin{itemize}
  \item $<\omega' \otimes s_Q, s> =<\omega', s \otimes s_Q> =0$, for any $ \omega' \in H^0(\cL^{q-r-2}(\Delta - Q)) \cong H^1(\cL^{r+2}(-\Delta+Q))^\vee$ and
  \item $<\omega_0, s> =0$.
\end{itemize}
So by (\ref{eq-span}), we see that $<\omega'', s> =0$ for any $ \omega'' \in H^0(\cL^{q-r-2}(\Delta)) $, which means $s=0$.

 In conclusion, to finish the proof, we only need to show  $W\subseteq H^1(C, \cL^{r+2}(-\Delta))$ is a proper subspace. This is done by the next lemma.

\begin{lem}\label{non-surjectivity}
The associated map $$\phi:H^0(C, \mathcal{S}^{r+1}(-\Delta))  \to H^1(C, \cL^{r+2}(-\Delta))$$ is not surjective for $r=0,\cdots, q-3$.
\end{lem}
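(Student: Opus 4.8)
The plan is to read $\phi$ as the connecting homomorphism of (\ref{ext: induction}) twisted by $-\Delta$, and then to transport the question, via Serre duality, into the existence of a single global section. From the long exact sequence attached to $0\to\cL^{r+2}(-\Delta)\to\mathcal{S}^r(-\Delta)\to\mathcal{S}^{r+1}(-\Delta)\to 0$ one has $\mathrm{Im}(\phi)=\ker\big(H^1(C,\cL^{r+2}(-\Delta))\xrightarrow{a}H^1(C,\mathcal{S}^r(-\Delta))\big)$, where $a$ is induced by the bottom inclusion $\cL^{r+2}\hookrightarrow\mathcal{S}^r$. Hence $\phi$ fails to be surjective precisely when $a\neq 0$, and it is this nonvanishing that I would prove.

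Next I would dualize everything. Using $\omega_C\cong\cL^q$ together with $\cE^\vee\cong\cE\otimes\cL^{-1}$ (so that $(S^k\cE)^\vee\cong S^k(\cE)\otimes\cL^{-k}$), a comparison of the associated graded pieces of the symmetric-power filtration identifies
\[ (\mathcal{S}^r)^\vee\otimes\omega_C(\Delta)\;\cong\;S^{k}(\cE)(\Delta),\qquad k:=q-r-2, \]
and carries the dual of the inclusion $\cL^{r+2}\hookrightarrow\mathcal{S}^r$ to the top quotient map $S^{k}(\theta)\colon S^{k}(\cE)\to\cL^{k}$. Thus $a$ is Serre dual to
\[ \psi\colon H^0(C,S^{k}(\cE)(\Delta))\longrightarrow H^0(C,\cL^{k}(\Delta)), \]
and the Lemma is reduced to the statement that $\psi\neq 0$, i.e. that some global section of $S^{k}(\cE)(\Delta)$ maps to a nonzero section of the top graded piece $\cL^{k}(\Delta)$.

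To exhibit such a section I would use the tautological sequence $0\to S^{k-1}(\cE)\to S^{k}(\cE)\xrightarrow{S^{k}(\theta)}\cL^{k}\to 0$ and show that its coboundary $\partial\colon H^0(\cL^{k}(\Delta))\to H^1(S^{k-1}(\cE)(\Delta))$ has nontrivial kernel. On the cover $C=U_1\cup U_2$ a section of $\cL^{k}(\Delta)$ lifts locally to $g_i(\sqrt[q]{z_i})^{k}$, and the \v{C}ech obstruction on $U_1\cap U_2$ is governed by the translation relation $\sqrt[q]{z_1}=\alpha\cdot\sqrt[q]{z_2}+\beta$. Expanding $(\sqrt[q]{z_1})^{k}=\sum_{a=0}^{k}\binom{k}{a}\alpha^{a}\beta^{k-a}(\sqrt[q]{z_2})^{a}$, the top term $a=k$ reproduces the gluing $\eta_1^{k}=\alpha^{k}\eta_2^{k}$ of $\cL^{k}$, while the obstruction is carried by the lower terms with $a<k$, which lie in $S^{k-1}(\cE)$. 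Since $\beta=y^{-e}$ and $y$ is a local parameter at $\infty$, each such term has pole order at $\infty$ at most $e(k-a)\le ek\le e(q-2)$, and by (\ref{eq lower bound}) this is $\le N=\deg\Delta$. After twisting by $\cO_C(\Delta)=\cO_C(N\cdot\infty)$ these corrections become regular along $\infty$, so they can be absorbed into the $U_2$-component; this shows that the obstruction class of a suitable section vanishes, whence $\psi\neq 0$.

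The hard part is this last step: confirming that the \v{C}ech $1$-cocycle valued in $S^{k-1}(\cE)(\Delta)$ is genuinely a coboundary, i.e. that it splits as a difference of sections regular on $U_1$ and on $U_2$, rather than merely having controlled poles. This is exactly where the hypothesis $(\bigstar)$ is used in its sharp numerical form $e(q-2)\le N$: it ensures that the pole produced by the Frobenius-type translation data $\alpha,\beta$ underlying $\cE$ — the very feature responsible for the failure of Kodaira vanishing on these surfaces — is dominated by the twist $\Delta$. The remaining cases are routine: the base value $k=0$ makes $\psi$ an isomorphism, and one checks that the lifted section is indeed nonzero in $\cL^{k}(\Delta)$.
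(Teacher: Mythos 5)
Your reduction of the Lemma to the nonvanishing of the Serre-dual map is sound as far as it goes: from the long exact sequence one indeed has $\mathrm{Im}(\phi)=\ker(a)$, so non-surjectivity of $\phi$ is equivalent to $a\neq 0$, equivalently $a^\vee\neq 0$. The genuine gap is in the dualization step. You assert that $\cE^\vee\cong\cE\otimes\cL^{-1}$ implies $(S^k\cE)^\vee\cong S^k(\cE)\otimes\cL^{-k}$, and hence that $(\mathcal{S}^r)^\vee\otimes\omega_C(\Delta)\cong S^k(\cE)(\Delta)$ with the dual of $\cL^{r+2}\hookrightarrow\mathcal{S}^r$ becoming $S^k(\theta)$. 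In characteristic $p$ this is false once $k\geq p$: the natural isomorphism is $(S^k\cE)^\vee\cong\Gamma^k(\cE^\vee)\cong\Gamma^k(\cE)\otimes\cL^{-k}$ with \emph{divided} powers, and for a rank-two bundle $\Gamma^k\not\cong S^k$ in general when $k\geq p$ (already for $p=2$, $k=2$: $S^2\cE$ and $\Gamma^2\cE$ are nonsplit extensions of $F^*\cE$ by $\det\cE$ in opposite directions). A ``comparison of associated graded pieces'' cannot close this gap: both sides carry filtrations with the same graded pieces $\cL^j(\Delta)$, $0\le j\le k$, but the extension data differ — concretely, in the bases $1,\sqrt[q]{z_i}$ the transition matrix of $S^k\cE$ has entries $\binom{b}{a}\alpha^a\beta^{b-a}$ while the twisted dual of $\mathcal{S}^r$ has entries $\binom{q-2-a}{q-2-b}(-\beta)^{b-a}\alpha^a$, and by Lucas these binomial patterns genuinely disagree mod $p$ (try $p=2$, $q=4$, $r=0$). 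Since the whole point of the paper is to take $q=p^n\geq m$ with $m$ arbitrary, and the induction needs the Lemma for all $r$ down to $0$, the range $k=q-r-2\geq p$ is unavoidable for $n\geq 2$; your identification, and hence your proof, is valid only in the classical Raynaud case $q=p$.

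The defect is localized and repairable, because your subsequent \v{C}ech computation never actually uses the precise binomial coefficients: expanding the transition of the correct dual bundle, the obstruction terms are still of the form $(\text{binomial})\cdot\alpha^a\beta^{k-a}\cdot(\text{regular basis section})$, and the only input is $v_\infty(\beta^{k-a})\geq -e(q-2)\geq -N$, i.e.\ the bound (\ref{eq lower bound}) coming from $(\bigstar)$, plus the facts that $\alpha$ is regular at $\infty$ and $y$ is invertible on $U_2\setminus\infty$. So the same lifting argument, run in $\Gamma^k(\cE)(\Delta)$ (or in the kernel sheaf $((\mathcal{S}^r)^\vee\otimes\omega_C(\Delta))$ computed directly), does establish $a^\vee\neq 0$. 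Alternatively one can avoid dualizing bundles altogether, which is what the paper does: it exhibits the explicit functional $\omega=\mathrm{d}z_1\otimes\eta_1^{-r-2}\in H^0(C,\omega_{C/k}\otimes\cL^{-r-2}(\Delta))$ — which under $\omega_{C/k}\cong\cL^q$ is exactly your section $\eta_1^k$ — lifts an arbitrary $s\in H^0(\mathcal{S}^{r+1}(-\Delta))$ locally, and checks with the same pole estimate that the \v{C}ech representative of $\omega\otimes\phi(s)$ extends regularly over $U_2$, hence vanishes in $\check{H}^1(\mathcal{U},\omega_{C/k})$. Your proposal is in effect the linear dual of that computation; once you replace the false $S^k$-self-duality by the divided-power dual (or by the direct pairing), it yields the paper's conclusion.
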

\begin{proof}
By Serre's duality, we only need to find
$$0\neq \omega \in H^0(C,\omega_{C}\otimes \cL^{-r-2}(\Delta))\cong H^1(C, \cL^{r+2}(-\Delta))^\vee$$
such that
for any $s \in H^0(C, \mathcal{S}^{r+1}(-\Delta))$ the pairing $<\omega,\phi(s)>=0$, which  means
$$\omega \otimes \phi(s) = 0\in H^1(C, \omega_{C}) \cong \check{H}^1(\mathcal{U}=\{U_1,U_2\}, \omega_{C}).$$

We will actually take
$$\omega=dz_1\otimes \eta_1^{-r-2} = \alpha^{q-r-2}\mathrm{d}z_2 \otimes \eta_2^{-r-2} \in H^0(C,\omega_{C}\otimes \cL^{-r-2})\subseteq H^0(C,\omega_{C}\otimes \cL^{-r-2}(\Delta)).$$
Recall that $\eta_i$ are the local generators of $\cL$ on $U_i$ with $\eta_1 = \alpha \cdot \eta_2$ and $\alpha=y^{e(qe-3)}$. Recall also $\beta=y^{-e}$ and $y^N$ is a generator for $\cO_C(-\Delta)$ on $U_2$.
Now take an arbitrary $s \in H^0(C, \mathcal{S}^{r+1}(-\Delta))$.  Note that $\mathcal{S}^{r+1}(-\Delta)$ has  bases $\{\sqrt[q]{z_1^{r+1}}\otimes \eta_1^{2}, \cdots , \sqrt[q]{z_1^{q-2}}\otimes \eta_1^{2}\}$ on $U_1$ and $\{y^N\sqrt[q]{z_2^{r+1}}\otimes \eta_2^{2}, \cdots , y^N\sqrt[q]{z_2^{q-2}}\otimes \eta_2^{2}\}$ on $U_2$. We can write $s$ locally as
\begin{equation}\label{eq:descr s}
\begin{split}
s&=(u_{r+1}\sqrt[q]{z_1^{r+1}}+\cdots +u_{q-2} \sqrt[q]{z_1^{q-2}})\otimes \eta_1^{2}\\
 &=y^N(v_{r+1}\sqrt[q]{z_2^{r+1}}+\cdots +v_{q-2} \sqrt[q]{z_2^{q-2}})\otimes \eta_2^{2}.
\end{split}
\end{equation}
with $u_i\in \mathcal {O}_{C}(U_1)$ and $v_i\in \mathcal {O}_{C}(U_2)$.

To calculate the element $\phi(s) \in \check{H}^1(\mathcal{U}, \cL^{r+2}(-\Delta))$, we lift
$s$ to sections of $\mathcal{S}^{r}(-\Delta)$ locally on $U_i$  as follows
$$s_1=(u_{r+1}\sqrt[q]{z_1^{r+1}}+\cdots + u_{q-2}\sqrt[q]{z_1^{q-2}})\otimes \eta_1^{2}~\mathrm{on}~U_1$$ and
$$s_2=(v_{r+1}\sqrt[q]{z_2^{r+1}}+\cdots +v_{q-2}\sqrt[q]{z_2^{q-2}})\otimes y^N\eta_2^{2}~\mathrm{on}~U_2.$$
Then the desired element $\phi(s)\in \check{H}^1(\mathcal{U}, \cL^{r+2}(-N \cdot \infty))$ is represented by $s_2-s_1\in H^0(U_1\cap U_2, \cL^{r+2}(-\Delta))$. To describe it, we recall the relation $z_1=\alpha^qz_2+\beta^q$ from (\ref{equ: translation relation}), and with the notation (\ref{Concrete Filtration of S^q}), we may regard $\sqrt[q]{z_i^r}= \eta_i^r$ as the generator of $\cL^{r} \simeq S^r(\sE)/S^{r-1}(\sE)$ on $U_i$.
Since $(s_2-s_1)|_{U_1 \cap U_2} \equiv 0 ~\mathrm{mod} ~\cL^{r+2}(-\Delta)$, if we replace $\sqrt[q]{z_2}$ by $\alpha^{-1}(\sqrt[q]{z_1}-\beta)$ in $s_2$, we obtain that
$$(s_2-s_1)|_{U_1 \cap U_2}\equiv 0 ~\mathrm{mod} ~\mathcal {O}_{C}\cdot\sqrt[q]{z_1^r}\otimes y^N\eta_2^{2}.$$
Therefore, by the relation $\sqrt[q]{z_1}=\eta_1= \alpha\cdot \eta_2$ of local generators of $\cL$ we may write that
$$(s_2-s_1)|_{U_1 \cap U_2}=\mu \cdot \sqrt[q]{z_1^r}\otimes y^N \eta_2^{2} =\mu y^N \cdot\alpha^r \otimes \eta_2^{r+2}\in H^0(U_1\cap U_2,\cL^{r+2}(-\Delta))$$
where $\mu$ denote the coefficient of $\sqrt[q]{z_1^r}$ in the polynomial expansion of
$$v_{r+1}(\frac{\sqrt[q]{z_1}-\beta}{\alpha})^{r+1}+\cdots +v_{q-2}(\frac{\sqrt[q]{z_1}-\beta}{\alpha})^{q-2}.$$
More precisely
\begin{align*}
 \mu=\alpha^{-r}(C_{r+1}^rv_{r+1} (\dfrac{-\beta}{\alpha}) + \cdots+C_{q-2}^rv_{q-2}(\dfrac{-\beta}{\alpha})^{q-2-r}).
\end{align*}

In turn, the element $\omega \otimes \phi(s)\in \check{H}^1(\mathcal{U}, \omega_{C})$ is represented by
\begin{align*}
& y^N\alpha^{q-r-2}(C_{r+1}^rv_{r+1} (\dfrac{-\beta}{\alpha}) + \cdots+C_{q-2}^rv_{q-2}(\dfrac{-\beta}{\alpha})^{q-2-r})\cdot \mathrm{d}z_2\\
  =&y^N(C^r_{r+1}v_{r+1}\cdot \alpha^{q-r-3} (-\beta)+\cdots+C^r_{q-2}v_{q-2}\cdot (-\beta)^{q-r-2})\mathrm{d}z_2
\end{align*}in $H^0(U_1\cap U_2,\omega_{C})$.
Since $v_i$ are regular on $U_2$, $\alpha=y^{e(qe-3)}$  and $\beta=y^{-e}$, we have
\begin{align*}
&v_\infty(y^N(C^r_{r+1}v_{r+1}\cdot \alpha^{q-r-3} (-\beta)+\cdots+C^r_{q-2}v_{q-2}\cdot (-\beta)^{q-r-2}))\\
\ge&v_\infty(y^N\cdot \beta^{q-r-2})=N-e(q-r-2)\ge N-e(q-2) \geq 0.
\end{align*}
Hence $\omega\otimes \phi(s)$ extends regularly on $U_2$, which indicates $\omega \otimes \phi(s) = 0$ in $\check{H}^1(\mathcal{U}, \omega_{C})$.
\end{proof}

\end{proof}

\subsection{Conclusion}
In summary, for any fixed algebraically closed field $\sk$ of characteristic $p>0$ and any positive integer $m\ge 1$, we can take $q=p^n\ge m$ and $e=(q+1)e_0$ for some $e_0\in \mathbb{N}_+$ such that $(\blacksquare)$ and $(\bigstar)$ hold. Then Theorem~\ref{thm: main1} shows that on the associated smooth projective surface $S$ constructed in \S~\ref{Sec: a generalization of Raynaud's surface}, there is an ample divisor $A$ such that $|K_S+mA|$ is not free of base points. Namely, we have proven Theorem~\ref{thm-main}.
\smallskip

\bibliographystyle{plain}

\end{document}